\newtheorem{theorem}{Theorem}
\newtheorem{lemma}[theorem]{Lemma}
\newtheorem{prop}[theorem]{Proposition}
\newtheorem{corollary}[theorem]{Corollary}
\theoremstyle{definition}
\theoremstyle{remark}
\newtheorem{remark}[theorem]{Remark}
\newcommand{\DD}{{\mathbb D}}
\newcommand{\CC}{{\mathbb C}}
\newcommand{\TT}{{\mathbb T}}
\DeclareMathOperator{\Aut}{Aut}
\renewcommand{\phi}{\varphi}
\subjclass[2010]{32U35, 30E05}
\begin{document}

\address{Institute of Mathematics, Faculty of Mathematics and Computer Science, Jagiellonian
University,  \L ojasiewicza 6, 30-348 Krak\'ow, Poland}\author{\L ukasz Kosi\'nski}\email{lukasz.kosinski@im.uj.edu.pl}

\address{Universit\'e de Toulouse, UPS, INSA, UT1, UTM, Institut de Math\'ematiques de Toulouse, F-31062 Toulouse, France}\author{Pascal J. Thomas}\email{pascal.thomas@math.univ-toulouse.fr}

\address{Institute of Mathematics, Faculty of Mathematics and Computer Science, Jagiellonian
University,  \L ojasiewicza 6, 30-348 Krak\'ow, Poland}\author{W\l odzimierz Zwonek}\email{wlodzimierz.zwonek@im.uj.edu.pl}

\thanks{The authors were supported by the grant of the Polish National Centre no. UMO-2013/08/M/ST1/00986 promoting the cooperation between the groups of complex analysis in the Paul Sabatier University in Toulouse and the Jagiellonian University in Krak\'ow}
\keywords{Green function, Lempert function, Carath\'eodory pseudodistance, Coman conjecture, $m$-extremal, $m$-complex geodesic, bidisc}

\title{Coman conjecture for the bidisc}
\maketitle
\maketitle
\begin{abstract}
In the paper we show the equality between the Lempert function and the Green function with two poles with equal weights in the bidisc thus giving the positive answer to a conjecture of Coman in the simplest unknown case. Actually, a slightly more general equality is proven which in some sense is natural when studied from the point of view of the Nevanlinna-Pick problem in the bidisc.
\end{abstract}

\section{Presentation of the problem and its history}
Let $D$ be a domain in $\mathbb C^n$ and let $\emptyset\neq P:=\{p_1,\ldots,p_N\}\subset D$ where $p_j\neq p_k$, $j\neq k$. Let also $\nu:P\to(0,\infty)$. Denote $\nu_j:=\nu(p_j)$. Let $z\in D$.

 Define $l_D(z;P;\nu):=l_D(z;(p_1,\nu_1),\ldots,(p_N,\nu_N))$ as the infimum of the numbers
$$
\sum\sb{j=1}\sp{N} \nu_j \log|\lambda_j|
$$
such that there is an analytic disc $\psi:\mathbb D\to D$ with $\psi(0)=z$, $\psi(\lambda_j)=p_j$, $j=1,\ldots,N$. 

Recall that $l_D(z;P;\nu)=\min\{l_D(z;A;\nu_{|A}):\emptyset \neq A\subset P\}$ (see \cite{Nik-Pfl 2006} for arbitrary $D$ or \cite{Wik 2001} for $D$ convex). The last equality will be of interest  for us since in the case of taut domains (convex and bounded domains are taut) the infimum in the definition of $l_D(z;P;\nu)$ will be attained by some analytic disc defining $l_D(z;A;\nu_{|A})$ for some $\emptyset\neq A\subset P$.

The function $l_D(\cdot;P;\nu)$ is called {\it the Lempert function with the poles at $P$ and with the weight function $\nu$ (or weights $\nu_j$)}.

Analoguously we define {\it the pluricomplex Green function $g_D(z;P;\nu)$ with the poles at $P$ and the weight function $\nu$} as the supremum of numbers $u(z)$ 
over all negative plurisubharmonic functions $u:D\to[-\infty,0)$ with logarithmic poles at $P$, i.e. such that
$$
u(\cdot)-\nu_j\log||\cdot -p_j||
$$
is bounded above near $p_j$, $j=1,\ldots,N$.

It is trivial that $g_D(z;P;\nu)\leq l_D(z;P;\nu)$. D. Coman conjectured in \cite{Com 2000} the equality $l_D(\cdot;P;\nu)=g_D(\cdot;P;\nu)$ for all convex domains $D$. 

The conjecture has an obvious motivation in the Lempert Theorem (see \cite{Lem 1981}) which implies the equality in the case $N=1$, and in the fact that the equality in the case of the unit ball and two poles with equal weights ($D=\mathbb B_n$, $N=2$, $\nu_1=\nu_2$)  holds (see \cite{Com 2000} and also \cite{Edi-Zwo 1998}).

The conjecture turned out to be false. The first counterexample was found in \cite{Car-Wie 2003} ($D:=\mathbb D^2$, $N=2$ and different weights).
Later a counterexample was found in the case of the bidisk ($D=\mathbb D^2$) with $N=4$ and all weights equal 
(see \cite{Tho-Trao 2003}). The conjecture does not hold either for $N=3$ and the weights equal in the case of the bidisc, or any bounded domain
 (see \cite{Tho 2012}).

The simplest non-trivial case that was not clear yet was the case of the bidisc, two poles and equal weights. Recall that a partial positive answer in this case was found in \cite{Car 1999} 
(see also \cite{Edi-Zwo 1998}) in the case the poles were lying on $\mathbb D\times\{0\}$. In \cite{Wik 2003} numerical computations were carried out which strongly suggested that the equality in the case $D=\mathbb D^2$, $N=2$, $\nu_1=\nu_2$ should hold. The aim of this paper is to show that actually the Coman conjecture holds in the bidisk ($D=\mathbb D^2$), $N=2$, two arbitrary poles and $\nu_1=\nu_2$. 
In our proof we show even more: the equality of the Carath\'eodory function (defined below) and the Lempert function with two poles and equal weights in the bidisc. The methods we use originated with the
study of the Nevanlinna-Pick problem for the bidisc.

\section{Nevanlinna-Pick problem, $m$-complex geodesics, formulation of the solution}
As already mentioned, the aim of the paper is to show a more general result than one claimed in the Coman conjecture for the bidisc, two poles and equal weights. 
To formulate the main result we need to introduce a new function. Since we shall be interested in equal weights we restrict
ourselves from now on to the case when $\nu\equiv 1$. To make the presentation clearer we adopt the notation $d_D(z,\{p_1,\ldots,p_N\}):=d_D(z;\{(p_1,1),\ldots,(p_N,\nu_N)\})$ ($d=l$ or $g$) where $p_j\in D$'s are pairwise disjoint, $j=1,\ldots,N$.

Let us recall the definition of {\it the Carath\'eodory function with the poles at $p_j$} (\textit{with weights equal to one})
\begin{multline}
c_D(z,p_1,\ldots,p_N):=\\
\sup\{\log |F(z)|:F\in\mathcal O(D,\mathbb D), F(p_j)=0,\; j=1,\ldots,N\}.
\end{multline}
It is simple to see that 
$$
c_D(\cdot,p_1,\ldots,p_N)\leq g_D(\cdot,p_1,\ldots,p_N)\leq l_D(\cdot,p_1,\ldots,p_N).
$$

Our main result is the following.
\begin{theorem}\label{thm:main-theorem}
Let $p,q\in\mathbb D^2$ be two distinct points. Then
$$
c_{\mathbb D^2}(z;p;q)=l_{\mathbb D^2}(z;p;q),\; z\in\mathbb D^2.
$$
\end{theorem}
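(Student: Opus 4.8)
The plan is to show that the a priori inequality $c_{\mathbb D^2}(z;p;q)\le l_{\mathbb D^2}(z;p;q)$ is in fact an equality by producing, for each fixed target point $z$, a bounded holomorphic function $F:\mathbb D^2\to\mathbb D$ vanishing at $p$ and $q$ with $\log|F(z)|$ equal to the value of the Lempert function. Since $\mathbb D^2$ is bounded and taut, the infimum defining $l_{\mathbb D^2}(z;p;q)$ is attained: there is an extremal analytic disc $\psi:\mathbb D\to\mathbb D^2$ with $\psi(0)=z$, $\psi(\lambda_1)=p$, $\psi(\lambda_2)=q$, realizing $l=\log|\lambda_1|+\log|\lambda_2|$, and (by the reduction $l_D(z;P)=\min_{A\subseteq P} l_D(z;A)$) we may assume both poles are genuinely hit, or else the one-pole case follows directly from the Lempert theorem and the Schwarz–Pick lemma. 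The first step is therefore to set up the extremal disc and to understand its structure as an $m$-complex geodesic in the sense announced in the section heading; the componentwise maps $\psi_1,\psi_2:\mathbb D\to\mathbb D$ should, by an extremality/variational argument, be finite Blaschke products of controlled degree, and one wants to pin down that degree (this is where "$m$-extremal" enters).

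The heart of the argument is the construction of the competing Carathéodory function $F$. The idea, inherited from the Nevanlinna–Pick circle of ideas for the bidisc, is to write $F = B\circ\psi$ or more generally $F(w_1,w_2)=\Phi(w_1,w_2)$ where $\Phi$ is built so that $\Phi\circ\psi$ is a Blaschke product on $\mathbb D$ with zeros exactly at $0$ (the relevant order), $\lambda_1$, $\lambda_2$ — i.e. we must solve an interpolation problem on the disc and then \emph{extend} the solution off the image of $\psi$ to all of $\mathbb D^2$ while keeping it bounded by $1$ in modulus and vanishing at $p$ and $q$. Concretely, one expects $\Phi$ to have the form $\frac{a(w_1,w_2)-\text{(something)}}{1-\overline{(\cdot)}(\cdot)}$ or a product $\frac{w_1-\alpha}{1-\bar\alpha w_1}\cdot(\text{correction in }w_2)$, chosen by analyzing the two coordinates of $p$, $q$ and the extremal disc: because $\psi_1,\psi_2$ are Blaschke products, there are Möbius transformations $m_j$ with $m_j\circ\psi_j = $ a common Blaschke product $B$, and then $F = B\circ(\text{projection})$ composed appropriately, or a two-variable interpolant matching $B$ on the graph, does the job. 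The key point to verify is that the resulting $F$ is honestly defined on $\mathbb D^2$, holomorphic, bounded by $1$, and that $|F(z)| = |\lambda_1\lambda_2|$, giving $c_{\mathbb D^2}(z;p;q)\ge \log|\lambda_1\lambda_2| = l_{\mathbb D^2}(z;p;q)$.

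I expect the main obstacle to be precisely this extension/realization step: degenerate or exceptional configurations of $p$, $q$, $z$ — for instance when the extremal disc is not unique, when its coordinate functions have low degree, when $p$ and $q$ share a coordinate, or when one coordinate of the extremal disc is constant (the case of poles on $\mathbb D\times\{0\}$ handled in the earlier literature) — will likely need to be treated separately, and in each the candidate $F$ must be constructed by hand and checked. A secondary obstacle is establishing the structural description of the extremal discs in the first place: one must rule out that the extremal $\psi$ uses arbitrarily many "phantom" preimages or has infinite-degree (non-rational) coordinates, which is the role of a lemma on $m$-extremal maps and $m$-complex geodesics. Once the extremal disc is known to be a low-degree rational map and one has a clean normal form for it, writing down $F$ should reduce to a finite, if delicate, computation with Blaschke products and Möbius transformations; the genuinely hard part is the classification of the geodesics and the uniform handling of all position cases.
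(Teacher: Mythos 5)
Your overall strategy --- beat the trivial inequality $c\le l$ by producing a left inverse $F$ to the Lempert extremal disc so that $F\circ\psi$ is a Blaschke product with zeros at $0,\lambda_1,\lambda_2$ --- is indeed the strategy of the paper, and you correctly identify the two pressure points (constructing the two-variable extension, and covering all configurations of $p,q$). But as proposed the argument is circular at its core: for a taut domain, the existence of a left inverse $F\in\mathcal O(\mathbb D^2,\mathbb D)$ with $F(p)=F(q)=0$ and $|F(z)|=|\lambda_1\lambda_2|$ to the Lempert extremal is essentially \emph{equivalent} to the equality $c=l$, so ``take the extremal disc and extend a suitable interpolant off its graph'' is a restatement of the theorem, not a proof, unless you supply an independent construction of $F$. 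You do not say where $F$ comes from. The paper gets it from the Pick-interpolation side: for the normal form $\phi(\lambda)=\lambda(\omega m_\alpha(\lambda),m_\beta(\lambda))$ of $3$-extremals that are not $2$-extremals, the associated three-point Pick problem is strictly two-dimensional, extremal and non-degenerate, so by Agler--McCarthy its unique solution is a rational inner function of degree $2$ with no $x_1^2$ or $x_2^2$ terms; this forces the explicit form $F(x)=\bigl(t\bar\omega x_1+(1-t)x_2+\tau\bar\omega x_1x_2\bigr)/\bigl(1+\tau((1-t)\bar\omega x_1+tx_2)\bigr)$, and Lemma~\ref{lemma:left-inverses} pins down the unimodular $\tau$ for which $F\circ\phi$ is a degree-$2$ Blaschke product. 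Nothing in your sketch produces this object or anything equivalent to it.

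The second, larger gap is the coverage step. The paper does \emph{not} argue pointwise for every pair $(p,q)$; it proves the equality on two explicit open sets ($\Omega_1$, reached by $2$-geodesics with one-variable left inverses, and $\Omega_2$, reached by the parametrized family $\Phi(\alpha,\beta,c,\omega,t)$ of $3$-geodesics above), shows their union is dense in $\mathbb D^2\times\mathbb D^2\setminus\triangle$, and concludes by continuity of $c$ and $l$. The density is the bulk of the work: one must show $\Phi$ maps onto each connected component $E_j$, which is done by an open--closed argument whose open part rests on the local injectivity (in fact two-to-one property) of the reduced map $\Psi$ --- a nontrivial linear-algebra computation --- and whose closed part uses the uniqueness of left inverses (Lemma~\ref{lemma:uniqueness-of-left-inverses}). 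Your proposal runs the logic in the opposite direction (classify the extremal disc of a given $(p,q)$, then handle ``all position cases''), and offers no mechanism for that classification or for the exceptional configurations beyond flagging them; the paper's reversal of direction --- parametrize the candidate geodesic/left-inverse pairs first, then prove the pairs of points they realize are dense --- is precisely what makes the proof close, and it is missing here.
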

Note that the function $F$ for which the supremum in the definition of the Carath\'eodory function is attained always exists. 
On the other hand in the case where
$D$ is a taut domain, for a point $z\in D$ and pole set $P$ there are always a set $\emptyset\neq Q=\{q_1,\ldots,q_M\}\subset P$ 
and a mapping $f\in\mathcal O(\mathbb D,D)$, $\lambda_j\in\mathbb D$ such that $f(0)=z$, $f(\lambda_j)=q_j$, $j=1,\ldots,M$ and
$l_D(z;P)=l_D(z;Q)=\sum\sb{j=1}\sp{M}\log|\lambda_j|$. Consequently, in case the equality $c_D(z;p_1,\ldots,p_N)=l_D(z;p_1;\ldots;p_N)$ holds,
 there exist $f\in\mathcal O(\mathbb D,D)$, $F\in\mathcal O(D,\mathbb D)$ such that $f(0)=z$, $f(\lambda_j)=q_j$, $F(q_j)=0$, $|F(0)|=\prod_{j=1}^M|\lambda_j|$, $j=1,\ldots,M$, and (thus) $F\circ f$ is a finite Blaschke product of degree $M\leq N$. 
This observation leads us to introduce and consider the notions of $m$-extremals and $m$-geodesics.

First recall that given a system of $m$ pairwise different numbers $(\lambda_1,\ldots,\lambda_m)$, $\lambda_j\in\mathbb D$, a domain $D\subset\mathbb C^n$, a holomorphic mapping $f:\mathbb D\to D$ is called {\it a (weak) $m$-extremal} for $(\lambda_1,\ldots,\lambda_m)$ if there is no holomorphic mapping $g:\mathbb D\to D$ such that $g(\mathbb D)\subset\subset D$ and $g(\lambda_j)=f(\lambda_j)$, $j=1,\ldots,m$. In case $f$ is $m$-extremal with respect to any choice of $m$ pairwise different arguments the mapping $f$ is called \textit{$m$-extremal}. A holomorphic mapping $f:\mathbb D\to D$ is called {\it an $m$-geodesic} if there is an $F\in\mathcal O(D,\mathbb D)$ such that $F\circ f$ is a finite Blaschke product of degree at most $m-1$. The function $F$ will be called {\it the left inverse to $f$}. It is immediate to see that any $m$-geodesic is an $m$-extremal.

The notions of (weak) $m$-extremals and $m$-geodesics, which have clear origin in Nevanlinna-Pick problems for functions in the unit disk, have been recently introduced and studied in \cite{Agl-Lyk-You 2013}, \cite{Agl-Lyk-You 2014}, \cite{Kos-Zwo 2014}, \cite{Kos 2014} and \cite{War 2014}. It is worth recalling that the description of $m$-extremals in the unit disc is classical and well-known. The mapping $h\in\mathcal O(\mathbb D,\mathbb D)$ is $m$-extremal for $(\lambda_1,\ldots,\lambda_m)$, $\lambda_j\in\mathbb D$ if and only if $h$ is a finite Blaschke product of degree less than or equal to $m-1$. Moreover, in such a case the interpolating function is uniquely determined (see \cite{Pick 1916}).

The remark after Theorem \ref{thm:main-theorem} on the form of functions for which the extremum in the definition of the Lempert function may be attained may be formulated as follows. 
For any taut domain $D$, for any system of poles $P=\{p_1,\ldots,p_N\}\subset D$ and any $z\in D\setminus P$ there are a subset $Q=\{q_1,\ldots,q_M\}\subset P$ and $f\in\mathcal O(\mathbb D,D)$ such that $f(\lambda_j)=q_j$, $j=1,\ldots,M$, $f(0)=z$, and $f$ is a weak $(M+1)$-extremal for $(0,\lambda_1,\ldots,\lambda_{M})$. Assuming additionally the equality $c_D(z;P)=l_D(z;P)$ would then imply the existence of a special $(M+1)$-geodesic, the one having some subset $Q\subset P$ in its image but such that the left inverse $F$ maps the whole set $P$ to $0$. Consequently a necessary (but not sufficient!) condition for having the desired equality at $z$ for the set of poles $P$ is the existence of some $(M+1)$-geodesic passing through a subset $Q\subset P$ and mapping $0$ to $z$.

Below we present a result on uniqueness of left inverses for $m$-geodesics in convex domains in $\mathbb C^2$ which we shall use in a (very special) case of the bidisk. The result is a simple generalization of a similar result formulated for $2$-geodesics that can be found in \cite{Kos-Zwo 2013} (however, for the clarity of the presentation we restrict ourselves to the dimension two). 
We also present its proof here for the sake of completeness.

\begin{lemma}\label{lemma:uniqueness-of-left-inverses}
Let $D$ be a convex domain in $\mathbb C^2$, $\lambda_j\in\mathbb D$, $j=1,\ldots,m$, $m\geq 2$, be pairwise different and let $f,g:\mathbb D\to D$ be such that $f(\lambda_j)=g(\lambda_j)=:z_j$ and $f\not\equiv g$. Assume additionally that $F,G\in\mathcal O(D,\mathbb D)$ are such that $F\circ f$ and $G\circ g$ are Blaschke products of degree less than or equal to $m-1$. Then $F\equiv G$. Moreover, for any $\mu\in\mathbb C$ and $\lambda\in\mathbb D$ such that $\mu f(\lambda)+(1-\mu)g(\lambda)\in D$ we have the equality
$$
F(\mu f(\lambda)+(1-\mu)g(\lambda))=F(f(\lambda)).
$$
\end{lemma}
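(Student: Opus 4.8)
The plan is to exploit convexity to produce, from the two distinct maps $f$ and $g$ with common values $z_j$ at $\lambda_j$, a whole family of competitors and then use extremality of Blaschke products in the disc to force rigidity. Concretely, for $\mu\in[0,1]$ (or more generally whenever the convex combination lands in $D$) set $h_\mu(\lambda):=\mu f(\lambda)+(1-\mu)g(\lambda)$. Since $D$ is convex, $h_\mu:\mathbb D\to D$ is holomorphic, and $h_\mu(\lambda_j)=z_j$ for every $j$. Now consider $\phi_\mu:=F\circ h_\mu:\mathbb D\to\mathbb D$. At the points $\lambda_j$ we have $\phi_\mu(\lambda_j)=F(z_j)=(F\circ f)(\lambda_j)$, so $\phi_\mu$ interpolates the same $m$ values that the Blaschke product $B:=F\circ f$ (of degree $\le m-1$) does.

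The key step is the uniqueness part of the classical Pick interpolation recalled in the excerpt: a holomorphic self-map of $\mathbb D$ that agrees with an extremal Blaschke product of degree $\le m-1$ at $m$ prescribed points must coincide with it. Strictly, I first need to know that $B=F\circ f$ is genuinely an $m$-extremal for $(\lambda_1,\dots,\lambda_m)$ in the disc, equivalently that $B$ really has degree exactly $\le m-1$ and is the Pick-extremal interpolant; this follows because $F\circ f$ is by hypothesis a finite Blaschke product of degree $\le m-1$, and any such product is $m$-extremal for any $m$ arguments (this is exactly the classical characterization quoted before the lemma). Hence $\phi_\mu\equiv B$ for every admissible $\mu$; in particular $F(h_\mu(\lambda))=F(f(\lambda))$ for all $\lambda\in\mathbb D$, which is precisely the displayed ``moreover'' identity, and taking $\mu=0$ gives $F\circ g=F\circ f=B$.

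It remains to upgrade $F\circ g\equiv F\circ f$ to $F\equiv G$. Here I would use that $G\circ g$ is also a Blaschke product of degree $\le m-1$ with $(G\circ g)(\lambda_j)=G(z_j)$; but I have not yet pinned down the values $G(z_j)$. The cleanest route: apply the same convex-combination argument with the roles of $F$ and $G$, and with $f$ and $g$ interchanged, to get $G\circ f\equiv G\circ g$. Now both $F$ and $G$ pull back $f$ to Blaschke products of degree $\le m-1$ agreeing at $\lambda_1,\dots,\lambda_m$; since $f\not\equiv g$, the image of $f$ together with the image of $g$ spans enough of $D$ — more precisely the union $f(\mathbb D)\cup g(\mathbb D)$ is not contained in a complex line (or one argues on the analytic set where $F=G$) — to conclude that $F$ and $G$, two holomorphic functions $D\to\mathbb D$ that coincide on the (two-real-parameter) family of curves $\lambda\mapsto h_\mu(\lambda)$, must be identically equal. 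I expect the main obstacle to be exactly this last identification: ruling out the degenerate possibility that $f$ and $g$ have images in a common complex hyperplane, on which $F$ and $G$ could a priori differ. This is where the dimension-two hypothesis and convexity of $D$ enter, essentially as in \cite{Kos-Zwo 2013}: one shows that if $f\not\equiv g$ then the traces of the $h_\mu$ sweep out a set that is not pluripolar / not contained in a hypersurface, forcing $F\equiv G$ by the identity principle.
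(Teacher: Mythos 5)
Your overall strategy is the paper's: form the convex combinations $h_\mu=\mu f+(1-\mu)g$, use the classical rigidity of extremal Pick data (a Blaschke product of degree $\le m-1$ through $m$ nodes is the unique interpolant of its values there) to freeze $F\circ h_\mu$, and then globalize. But two of the essential steps are left as gestures, and the first of them cannot be filled as you state it. Your two separate rigidity arguments give $F\circ h_\mu\equiv F\circ f$ and $G\circ h_\mu\equiv G\circ g$; to identify $F$ with $G$ anywhere you then assert that the two Blaschke products ``agree at $\lambda_1,\dots,\lambda_m$'', i.e.\ that $F(z_j)=G(z_j)$. Nothing in your argument yields this, and it does not follow from the hypotheses as written: in $\DD^2$ take $m=2$, $\lambda_1=0$, $\lambda_2=1/2$, $f(\lambda)=(\lambda,0)$, $g(\lambda)=(\lambda,\lambda m_{1/2}(\lambda))$, $F(x)=x_1$ and $G(x)=\omega x_1$ with $\omega\in\TT\setminus\{1\}$; then $F\circ f$ and $G\circ g$ are degree-one Blaschke products, $f\not\equiv g$, yet $F\not\equiv G$. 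The identity $F(z_j)=G(z_j)$ is exactly the normalization under which the paper's one-line step ``$F\circ h_t\equiv G\circ h_t=:B$ by uniqueness of the solution of the extremal problem'' makes sense (both compositions then solve the \emph{same} extremal Pick problem, whose solution is the unique Blaschke product $B$), and it holds in every application in the paper because the left inverses there have prescribed values at the poles. You must either derive it or recognize it as part of the setup; it is the crux, not a detail to be absorbed into ``agreeing at $\lambda_1,\dots,\lambda_m$''.

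Second, the globalization. The paper does not argue via pluripolarity or via the images spanning a hyperplane: it complexifies the parameter, setting $\Phi(\mu,\lambda)=g(\lambda)+\mu(f(\lambda)-g(\lambda))$, picks $U\subset\subset\DD$ on which $g$ is injective and $f\neq g$, takes a neighbourhood $\Omega\supset[0,1]\times U$ with $\Phi(\Omega)\subset D$, and uses the identity principle in $\mu$ to upgrade $F\circ\Phi\equiv G\circ\Phi$ from real $\mu\in[0,1]$ to all of $\Omega$; since $\Phi$ is injective there, $\Phi(\Omega)$ is open, so $F\equiv G$ on an open subset of $D$ and hence on $D$. The relevant nondegeneracy is thus the openness of $\Phi(\Omega)$, which comes for free from $f\not\equiv g$ and local injectivity of $g$ --- not the condition you worry about, that $f(\DD)\cup g(\DD)$ avoid a complex line. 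The same complexification in $\mu$ for fixed $\lambda$ is also what proves the ``moreover'' clause: your argument covers only those $\mu$ for which $h_\mu$ maps \emph{all} of $\DD$ into $D$, whereas for a fixed $\lambda$ the set of admissible $\mu$ is a convex open subset of $\CC$ containing $[0,1]$, on which $\mu\mapsto F(h_\mu(\lambda))-F(f(\lambda))$ vanishes identically by the identity principle.
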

\begin{proof}
For $t\in[0,1]$ define $h_t:=tf+(1-t)f\in\mathcal O(\mathbb D,D)$. Then $h_t(\lambda_j)=z_j$, $j=1,\ldots,m$ so, due to the uniqueness of the solution of the extremal problem in the disk, we get that $F\circ h_t\equiv G\circ h_t=:B$, $t\in[0,1]$, is a finite Blaschke product of degree $\leq m-1$. Consequently, we get the equality $F\equiv G$ on the set
$$
\{tf(\lambda)+(1-t)g(\lambda)=g(\lambda)+t(f(\lambda)-g(\lambda)):t\in[0,1],\lambda\in\mathbb D\}.
$$

Let $\emptyset\neq U\subset\subset\mathbb D$ be such that $f(\lambda)\neq g(\lambda)$, $\lambda\in U$, and $g_{|U}$ is injective. Consider the function
$$
\Phi(\mu,\lambda):=g(\lambda)+\mu(f(\lambda)-g(\lambda)),\; (\mu,\lambda)\in\mathbb C\times\mathbb D.
$$
There is a domain $\mathbb C^2\supset \Omega\supset [0,1]\times U$ such that $\Phi(\Omega)\subset D$. The identity principle implies that $F\circ\Phi\equiv G\circ \Phi$ on $\Omega$. 
But $\Phi$ is injective so $\Phi(\Omega)$ is open;
therefore, $F\equiv G$ on $D$. The additional property follows similarly, by the identity principle applied to function $\mu\mapsto F\circ h_{\mu}(\lambda)-G\circ h_{\mu}(\lambda)$, for any $\lambda\in\mathbb D$.
\end{proof}

\section{Properties of extremals for the Lempert function in case the Coman conjecture holds}\label{section:heuristic}

Let us now restrict our considerations to the case of the bidisc and two poles  $p,q\in\mathbb D^2$, $p\neq q$. Without loss of generality we may assume that $z=(0,0)$. Simple continuity properties of the Lempert and Carath\'eodory function allow us to reduce the Coman conjecture to the proof of the equality 
$$
c(p,q):=c_{\mathbb D^2}((0,0),p,q)=l_{\mathbb D^2}((0,0),p,q)=:l(p,q)
$$ 
for $(p,q)$ from some open, dense subset of $\mathbb D^2\times\mathbb D^2\setminus\triangle$ 
to be defined later ($\triangle$ denotes the diagonal in the corresponding Cartesian product $X\times X$, here $X=\mathbb D^2$).

Below we shall make some heuristic (though formal) reasoning which will lead us to the structure of the proof of the equality $c(p,q)=l(p,q)$ presented later.

Our aim will be to show the existence of (special) left inverses for those $3$-extremals (or $2$-extremals) for which the infimum in the definition of $l(p;q)$ is attained.

\begin{remark}\label{remark:inverses}
Note that assuming we have the equality for $(p,q)\in\mathbb D^2\times\mathbb D^2$, $p\neq q$, $p,q\neq(0,0)$ there would be two possibilities (up to a permutation of variables $p$ and $q$)

\begin{enumerate}[(i)]
\item there are holomorphic $\varphi:\DD\to \DD^2$, $F:\DD^2\to \DD$ and $\alpha\in (0,1)$ such that $\varphi(0)=(0,0)$, $\varphi(\alpha) = p$ and $F(p) = F(q) = \alpha$, $F(0,0)=0$.

Then $F(\varphi(\lambda)) = \lambda$, so $\varphi(\lambda) = (\omega \lambda, \psi(\lambda))$ where $|\omega|=1$ (up to switching coordinates). If $\psi\notin\Aut(\DD)$ then Lemma~\ref{lemma:uniqueness-of-left-inverses} implies that $F(z) = \bar \omega z_1$ so $p_1=q_1$ and $|p_2|\leq |p_1|$. Therefore, in this case only a thin (nowhere-dense) set of points $(p,q)$ would be covered and consequently that could be ``neglected''.

The second subcase is when $\psi\in \Aut(\DD)$ and $\psi(0)=0$. But then $|p_1|=|p_2|$ and $q$ may be ``quite general'', when ``quite general'' depends on left inverses to $\lambda\mapsto (\lambda, \lambda)$ in $\DD^2$. But as before the set of $(p,q)$'s involved forms a thin set.

\item The function $\varphi$ realizing the infimum is a weak $3$-extremal with respect to $(0,a,b)$ such that $\varphi(0)=(0,0)$, $\varphi(a) = p$, $\varphi(b)=q$. The (hypothetical) special left inverse 
$F:\DD^2\to \DD$ would satisfy the equalities $F(p)=F(q)=0$ and $F(0)= a b$. Consequently $F\circ \phi = m_a m_b$, where $m_a$, $m_b$ are (idempotent) M\"obius maps. 
We have two possibilities again:

a) $\phi$ is a geodesic ($2$-extremal). This holds if either
\begin{itemize}
\item $|p_2|<|p_1|$, $|q_2|<|q_1|$ and $m(\frac{p_2}{p_1} , \frac{p_2}{p_1} ) \leq m(p_1, p_1)$, or
\item $|p_1|<|p_2|$, $|q_1|<|q_2|$ and $m(\frac{p_1}{p_2} , \frac{q_1}{q_2} ) \leq m(p_2, p_2)$, or
\item $p_2=\omega p_1$ and $q_2=\omega q_1$ for some unimodular $\omega$.
\end{itemize}

b) $\phi$ is not a $2$-extremal. First note that $\phi(\lambda)=\lambda\psi(\lambda)$ where $\psi$ is a $2$-extremal (geodesic) (see e. g. \cite{Kos-Zwo 2014}). Consequently, (up to a permutation of the coordinates) $\phi(\lambda)=\lambda(m(\lambda),h(\lambda))$ where $m$ is some M\"obius map and $h\in\mathcal O(\mathbb D,\mathbb D)$. In the case $h$ is not a M\"obius map the mapping $\phi$ is not uniquely determined -- in the sense that for the triple $(0,a,b)$ there exist also another $3$-extremal mapping $\tilde\phi$ which maps this triple of numbers to the same triple of points. But existence of the left inverse already gives its uniqueness (see Lemma~\ref{lemma:uniqueness-of-left-inverses}); moreover, it follows from the same lemma that $F(\lambda m(\lambda),\mu)=m_a(\lambda)m_b(\lambda)$
for any $\mu\in \mathbb D$, which easily implies that $F(z)=a(z_1)$ where $a$ is some M\"obius map. But the last property may hold only if $p_1=q_1$. In other words, this may hold only for pairs $(p,q)$ from a thin set.

The above considerations suggest that the generic case for $\phi$ being a $3$-extremal from the definition of the Lempert function which are not $2$-extremals should be the one given by the formula 
\begin{equation}\label{equation:three-extremals}
\phi(\lambda)=\lambda(m(\lambda),n(\lambda)), \lambda\in\mathbb D
\end{equation} 
where $m$ and $n$ are M\"obius maps.
\end{enumerate}
\end{remark}

Our aim is now to show what the necessary form of functions $F\in\mathcal O(\mathbb D^2,\mathbb D)$ such that $F\circ f$ is a Blaschke product should be. We present below the reasoning employing some results of McCarthy and Agler. Let us also mention that G. Knese (see \cite{Kne 2014}) let us know about another approach which leads to the same form of left inverses.

Without loss of generality we may assume that the mapping from \eqref{equation:three-extremals} satisfies $m=m_{\alpha}$, $n=n_{\beta}$ where $\alpha,\beta\in\mathbb D$. 
We are looking for a necessary form of a function $F:\DD^2 \to \DD$ such that $F\circ \varphi = B$, where $B$ is a Blaschke product of degree $2$.
Since $F\circ \varphi(0)=0$, it suffices to consider the case when $B(\lambda)= \lambda m(\lambda)$ for some M\"obius map $m$.

We have the following situation: 
$$F(\lambda m_\alpha (\lambda), \lambda m_\beta(\lambda)) = \lambda m_\gamma (\lambda)$$ 
and we are looking for a formula for $F$. We consider only the case when $\alpha \neq \beta$ (as $\alpha=\beta$ gives points from a thin set). Note that it may happen that $\gamma = \alpha$ or $\beta=\gamma$ and then $F$ depends just on one variable (use Lemma~\ref{lemma:uniqueness-of-left-inverses}).

Otherwise, assuming that $F$ and $\gamma$ do exist consider the following Pick problem: 
$$
\begin{cases}(0,0)\mapsto 0\\ (\gamma m_\alpha(\gamma) , \gamma m_\beta (\gamma)) \mapsto 0,\\ (\lambda' m_\alpha (\lambda'), \lambda' m_\beta (\lambda'))\mapsto \lambda' m_\gamma (\lambda'),
\end{cases}
$$ where $\lambda'$ is any point in $\DD$. It is quite clear that this problem is strictly
$2$-dimensional, extremal and non-degenerate (with the the notions understood as defined in 
\cite[Chapter 12]{Agl-McC 2002}, itself drawing from \cite{Agl-McC 2000} where the terminology is slightly different). 
Therefore, it follows from \cite[Theorem 12.13, p. 201--204]{Agl-McC 2002} that the above problem has a unique solution which is given by a rational inner function of degree $2$, with no
terms in $x_1^2$ or $x_2^2$. 
It is easily seen that the solution to this problem is a left inverse we are looking for. Therefore, 
$$F(x)= \frac{A x_1 + B x_2 +  C x_1 x_2}{1+Dx_1+Ex_2+G x_1 x_2}.$$ 
Now we proceed in a standard way: comparing multiplicities in the poles of $m_\alpha$ and $m_\beta$, etc.
After additional calculations we get that $A+B=1$ and then 
$$F(x) = \frac{t x_1 + (1-t) x_2 - \omega x_1 x_2}{1- ((1-t) x_1 + t x_2) \omega},$$ where $t\in (0,1)$ and $\omega\in \TT$ depends only on $\alpha$ and $\beta$. In particular, $\gamma$ is a convex combination of $\alpha$ and $\beta$.

The above considerations make us present a formal result that we shall use in the sequel. 

\begin{lemma}\label{lemma:left-inverses}
Let $\alpha,\beta\in\mathbb D$, $\alpha\neq\beta$, $t\in[0,1]$, $\omega,\tau\in\mathbb T$. Define $\varphi(\lambda):=\lambda(\omega m_{\alpha}(\lambda),m_{\beta}(\lambda))$
and let
\begin{equation}
 F(x):=\frac{t\bar\omega x_1+(1-t)x_2+\tau\bar\omega x_1x_2}{1+\tau((1-t)\bar\omega x_1+tx_2)},\;x=(x_1,x_2)\in\mathbb D^2.
\end{equation}
Set $F(\varphi(\lambda))=:\lambda f(\lambda)$, $\lambda\in\DD$. Denote $f(0)=\gamma:=t\alpha+(1-t)\beta$. Then $f$ is an automorphism of $\mathbb D$ (equal to $m_{\gamma}$) 
if and only if
$\tau=\frac{\overline{\alpha-\beta}}{\alpha-\beta}$.
\end{lemma}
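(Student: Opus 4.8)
The plan is to compute $F\circ\varphi$ explicitly and extract the condition on $\tau$ by looking at the behaviour of the resulting rational function of $\lambda$. First I would substitute $x_1=\lambda\omega m_\alpha(\lambda)$ and $x_2=\lambda m_\beta(\lambda)$ into the formula for $F$. The factor $\bar\omega$ in the numerator and denominator cancels the unimodular constant $\omega$ in the first coordinate, so after this substitution every occurrence of $x_1$ contributes $\lambda m_\alpha(\lambda)$ and every occurrence of $x_2$ contributes $\lambda m_\beta(\lambda)$. Thus
\begin{equation*}
F(\varphi(\lambda))=\frac{t\lambda m_\alpha(\lambda)+(1-t)\lambda m_\beta(\lambda)+\tau\lambda^2 m_\alpha(\lambda)m_\beta(\lambda)}{1+\tau\bigl((1-t)\lambda m_\alpha(\lambda)+t\lambda m_\beta(\lambda)\bigr)}.
\end{equation*}
An obvious factor $\lambda$ comes out of the numerator, confirming $F(\varphi(\lambda))=\lambda f(\lambda)$ with
\begin{equation*}
f(\lambda)=\frac{t m_\alpha(\lambda)+(1-t) m_\beta(\lambda)+\tau\lambda m_\alpha(\lambda)m_\beta(\lambda)}{1+\tau\lambda\bigl((1-t) m_\alpha(\lambda)+t m_\beta(\lambda)\bigr)}.
\end{equation*}

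Next I would clear denominators in $m_\alpha,m_\beta$ and identify $f$ as a ratio of two polynomials of degree at most $2$. Writing $m_\alpha(\lambda)=\frac{\alpha-\lambda}{1-\bar\alpha\lambda}$ and likewise for $m_\beta$, multiply numerator and denominator of $f$ by $(1-\bar\alpha\lambda)(1-\bar\beta\lambda)$; the degree count shows the numerator and denominator of the resulting fraction are polynomials of degree $\le 2$ in $\lambda$. Since $F$ is inner on $\mathbb{D}^2$ and $\varphi(\mathbb D)\subset\mathbb D^2$, the map $f$ is a ratio of finite Blaschke-type data; the key point is that $f$ extends holomorphically across $\overline{\mathbb D}$ and is an automorphism of $\mathbb D$ \emph{precisely} when that degree-$\le 2$ numerator and degree-$\le 2$ denominator drop to degree $1$ after a common factor cancels — equivalently when numerator and denominator share a common root. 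Computing the value $f(0)=t\alpha+(1-t)\beta=\gamma$ is immediate from the formula above by setting $\lambda=0$ (the $\tau\lambda(\cdots)$ terms vanish), which also gives the stated value of $\gamma$.

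The decisive step is to pin down exactly when the common factor appears. I would examine the numerator and denominator polynomials at the natural ``suspicious'' points. A clean way: observe that $1/\bar\alpha$ is the pole of $m_\alpha$ and $1/\bar\beta$ of $m_\beta$, so evaluate the cleared numerator and denominator at $\lambda=1/\bar\alpha$ and $\lambda=1/\bar\beta$; at $\lambda=1/\bar\alpha$ the surviving contributions come only from the terms containing $m_\beta$ (the $m_\alpha$ factor carries the zero of $1-\bar\alpha\lambda$), and comparing numerator and denominator there yields one linear relation in $\tau$, and symmetrically at $\lambda=1/\bar\beta$. Matching these forces the single condition
\begin{equation*}
\tau=\frac{\overline{\alpha-\beta}}{\alpha-\beta},
\end{equation*}
and conversely, substituting this value back in should make a linear factor cancel, reducing $f$ to a Möbius automorphism, which by the value at $0$ must be $m_\gamma$. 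The main obstacle I anticipate is purely computational bookkeeping: keeping track of the unimodular constant $\omega$ (it should cancel throughout, but one must check the $\tau\bar\omega x_1x_2$ term interacts correctly) and organizing the degree-$2$ polynomial identity so that the cancellation condition is visibly a single equation in $\tau$ rather than an overdetermined system — the fact that it collapses to one scalar equation is where the special algebraic structure of $F$ (no $x_1^2$ or $x_2^2$ terms, numerator and denominator ``conjugate-symmetric'' in the prescribed way) does the work.
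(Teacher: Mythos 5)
Your opening computation is correct: after cancelling $\omega$ against $\bar\omega$ one gets
$f(\lambda)=\frac{t m_\alpha(\lambda)+(1-t) m_\beta(\lambda)+\tau\lambda m_\alpha(\lambda)m_\beta(\lambda)}{1+\tau\lambda((1-t) m_\alpha(\lambda)+t m_\beta(\lambda))}$ and $f(0)=\gamma$. But the decisive part of your plan has a concrete error. After multiplying through by $(1-\bar\alpha\lambda)(1-\bar\beta\lambda)$ the numerator is
$t(\alpha-\lambda)(1-\bar\beta\lambda)+(1-t)(\beta-\lambda)(1-\bar\alpha\lambda)+\tau\lambda(\alpha-\lambda)(\beta-\lambda)$,
which has degree $3$ with leading coefficient $\tau\neq0$ (the extra factor $\lambda$ in the cross term raises the degree), and likewise the denominator has degree $3$ in general. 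So $f$ is a finite Blaschke product of degree up to $3$, and for it to be an automorphism you need a common \emph{quadratic} factor to cancel, not a single common root; your criterion as stated is insufficient. Worse, the ``clean way'' you propose does not locate the cancellation: $\lambda=1/\bar\alpha$ and $\lambda=1/\bar\beta$ are not roots of the cleared denominator (one computes $D(1/\bar\alpha)=\tau(1-t)(|\alpha|^2-1)(\bar\alpha-\bar\beta)/\bar\alpha^{3}\neq0$ for $t\neq1$, $\alpha\neq\beta$, $\alpha\neq 0$), so comparing numerator and denominator there does not yield the condition on $\tau$. The claim that everything ``collapses to one scalar equation'' is exactly the point that needs proof, and your outline gives no mechanism for it.

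The paper takes a different and much shorter route that produces the single scalar equation automatically: since $f$ is a holomorphic self-map of $\DD$, the Schwarz--Pick inequality gives $|f'(0)|\le 1-|f(0)|^2$ with equality if and only if $f\in\Aut(\DD)$, so one only has to compute $f'(0)$. From your formula,
$f'(0)=-(1-|\gamma|^2)+t(1-t)(\alpha-\beta)\bigl(\overline{\alpha-\beta}-\tau(\alpha-\beta)\bigr)$,
and since $t(1-t)|\alpha-\beta|^2<1-|\gamma|^2$, one checks that $|f'(0)|=1-|\gamma|^2$ exactly when the second summand vanishes, i.e.\ when $\tau=\overline{\alpha-\beta}/(\alpha-\beta)$; the value $f(0)=\gamma$ together with $f'(0)=-(1-|\gamma|^2)$ then pins $f$ down as $m_\gamma$. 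If you want to salvage your factorization approach you would have to work with the degree-$3$ polynomials, use the reflection symmetry $N(\lambda)=\tau\lambda^{3}\overline{D(1/\bar\lambda)}$ to show that common roots come in pairs $\rho,1/\bar\rho$, and find where those roots actually lie --- considerably more work than the derivative computation at the origin.
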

\begin{proof}
The proof of the above lemma reduces to showing that in the inequality $|f^{\prime}(0)|/(1-|f(0)|^2)\leq 1$ the equality holds if and only if $\tau=\frac{\overline{\alpha-\beta}}{\alpha-\beta}$ which is elementary although tedious.
\end{proof}
A direct consequence of the above result is the following.

\begin{corollary} 
\label{coreq}
Let $\alpha,\beta,t,\omega,\gamma,\phi$ be as above,
and $\tau=\frac{\overline{\alpha-\beta}}{\alpha-\beta}$. Then we have the equality 
\begin{equation}
 c(\varphi(\lambda),\varphi(m_{\gamma}(\lambda)))=l(\varphi(\lambda),\varphi(m_{\gamma}(\lambda))),\;\lambda\in\DD,\;m_{\gamma}(\lambda)\neq\lambda.
\end{equation}
\end{corollary}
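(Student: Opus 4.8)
The plan is to deduce Corollary~\ref{coreq} directly from Lemma~\ref{lemma:left-inverses} together with the elementary inequalities $c\le g\le l$. Fix $\alpha,\beta,t,\omega,\gamma$ as in the statement, set $\tau=\tfrac{\overline{\alpha-\beta}}{\alpha-\beta}$, and let $\varphi(\lambda):=\lambda(\omega m_\alpha(\lambda),m_\beta(\lambda))$ and $F$ be the corresponding rational inner function from Lemma~\ref{lemma:left-inverses}. First I would verify that $\varphi$ is indeed an analytic disc into $\mathbb D^2$ (each coordinate is a product of a Möbius map and $\lambda$, hence maps $\mathbb D$ into $\mathbb D$) and that $F\in\mathcal O(\mathbb D^2,\mathbb D)$; the latter is the content of the computation behind Lemma~\ref{lemma:left-inverses}, and I would simply invoke it. By Lemma~\ref{lemma:left-inverses}, with this choice of $\tau$, the function $f$ defined by $F(\varphi(\lambda))=\lambda f(\lambda)$ is the automorphism $m_\gamma$, so that $F\circ\varphi(\lambda)=\lambda m_\gamma(\lambda)$, a Blaschke product of degree $2$.

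Next I would fix $\lambda_0\in\mathbb D$ with $m_\gamma(\lambda_0)\neq\lambda_0$, and write $p:=\varphi(\lambda_0)$, $q:=\varphi(m_\gamma(\lambda_0))$. Since $m_\gamma$ is an involution, $B(\lambda):=\lambda m_\gamma(\lambda)=F(\varphi(\lambda))$ satisfies $B(\lambda_0)=\lambda_0 m_\gamma(\lambda_0)=B(m_\gamma(\lambda_0))$; call this common value $\mu_0=\lambda_0 m_\gamma(\lambda_0)$, and note $B(0)=0$. After precomposing $\varphi$ with an automorphism of $\mathbb D$ sending $0$ to a point $\lambda_*$ with $B(\lambda_*)=0$ (e.g. $\lambda_*=0$ already works, giving $\varphi(0)=(0,0)$), we may line things up so that $\varphi(0)=(0,0)$, $\varphi(\lambda_0)=p$, $\varphi(m_\gamma(\lambda_0))=q$. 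To get an upper bound for $l$ I need an analytic disc through $(0,0)$ hitting $p$ and $q$; $\varphi$ itself is such a disc, with $\varphi(0)=(0,0)$, $\varphi(\lambda_0)=p$, $\varphi(m_\gamma(\lambda_0))=q$, so by definition $l((0,0);p;q)\le \log|\lambda_0|+\log|m_\gamma(\lambda_0)|=\log|\mu_0|$. For the matching lower bound I use $F$: since $F\in\mathcal O(\mathbb D^2,\mathbb D)$ and $F(p)=F(q)=\mu_0$, the function $m_{\mu_0}\circ F$ maps $\mathbb D^2$ into $\mathbb D$ and vanishes at $p$ and $q$, so $c((0,0);p;q)\ge \log|m_{\mu_0}(F(0,0))|=\log|m_{\mu_0}(0)|=\log|\mu_0|$. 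Combining with the trivial chain $c\le l$ yields $c((0,0);p;q)=l((0,0);p;q)=\log|\mu_0|$, which is exactly the asserted equality $c(\varphi(\lambda_0),\varphi(m_\gamma(\lambda_0)))=l(\varphi(\lambda_0),\varphi(m_\gamma(\lambda_0)))$.

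There is one bookkeeping subtlety worth flagging: the equality to be proven is stated without the base point $(0,0)$ appearing explicitly, so I should make sure the normalization $z=(0,0)$ matches the notational convention $c(p,q):=c_{\mathbb D^2}((0,0),p,q)$ fixed at the start of Section~\ref{section:heuristic}; once that is in place the argument above is literally the statement. The main (and essentially only) obstacle is already resolved inside Lemma~\ref{lemma:left-inverses}: one must know that $F$ genuinely maps $\mathbb D^2$ into $\mathbb D$ and that with the prescribed $\tau$ the pullback $F\circ\varphi$ is the degree-$2$ Blaschke product $\lambda\mapsto\lambda m_\gamma(\lambda)$ rather than some $f$ with $|f'(0)|/(1-|f(0)|^2)<1$ (which would only give $|F\circ\varphi(\lambda)|<|\lambda m_\gamma(\lambda)|$ and break the lower bound). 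Granting Lemma~\ref{lemma:left-inverses}, the corollary is a two-line consequence of the sandwich $c\le l$ and the two explicit competitors $\varphi$ and $m_{\mu_0}\circ F$.
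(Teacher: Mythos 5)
Your proposal is correct and is exactly the argument the paper intends: the paper offers no written proof beyond calling the corollary ``a direct consequence'' of Lemma~\ref{lemma:left-inverses}, and the intended content is precisely your sandwich $\log|\lambda_0 m_\gamma(\lambda_0)|\le c\le l\le\log|\lambda_0 m_\gamma(\lambda_0)|$, with $\varphi$ as the competitor disc and $m_{\mu_0}\circ F$ as the competitor function, using that $m_\gamma$ is an involution so $F$ takes the same value $\mu_0$ at both poles. The only (harmless) detour is your remark about precomposing with an automorphism, which is unnecessary since $\varphi(0)=(0,0)$ already.
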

The above equality is a key one. It will turn out that the set of pairs of points $(\phi(\lambda),\phi(m_{\gamma}(\lambda))$ (parametrized by $(\alpha,\beta,c,t,\omega)$) will build an open set, which together with the one constructed with the help of extremals for the Lempert functions being $2$-geodesics will be dense in $\mathbb D^2\times\mathbb D^2$ -- that will complete the proof.


\section{Proof of the equality $c(p;q)=l(p;q)$;  'one-dimensional' case}
After the preliminary remarks in the previous section we start with the formal proof. We shall consider two open sets in $\mathbb D^2\times\mathbb D^2\setminus\triangle$ 
whose union forms a dense subset of $\mathbb D^2\times\mathbb D^2\setminus\triangle$ and on each part the desired equality will be proven. Let us also denote 
$\sigma(p,q):=((p_2,p_1),(q_2,q_1))$, $p,q\in\mathbb D^2$. The equality of the Lempert and Carath\'eodory function
on the sets will be simple and will actually follow from the earlier reasoning. The key problem will be with the proof of the density of the union of the two sets considered. 
In this section we deal with the set defined with the help of $2$-geodesics.

Define $U$ as the set of points $(p;q)\in\mathbb D^2\times\mathbb D^2$ satisfying the following inequalities
\begin{equation}
|p_2|<|p_1|,|q_2|<|q_1| \text{ and } m\left(p_2/p_1,q_2/q_1\right)<m(p_1,q_1),
\end{equation}
where $m$ is the M\"obius distance on the unit disc given by the formula $m(\lambda_1,\lambda_2):=\left|\frac{\lambda_1-\lambda_2}{1-\bar\lambda_1\lambda_2}\right|$.

Denote $\Omega_1:=U\cup \sigma(U)$.

Let $(p,q)\in\Omega_1$. Without loss of generality we may assume that $(p,q)\in U$. Let $\phi(\lambda):=(\lambda,\lambda\psi(\lambda))$ where $\psi\in\mathcal O(\mathbb D,\mathbb D)$ is such that $\psi(p_1)=p_2/p_1$, $\psi(q_1)=q_2/q_1$. Let $F(z):=m_{p_1}(z_1)m_{q_1}(z_1)$, $z\in\mathbb D^2$. Then $\phi(0)=(0,0)$, $\phi(p_1)=p$, $\phi(q_1)=q$, $F(0,0)=p_1q_1$ and $F(p)=F(q)=0$ which gives the equality
$$
c(p;q)\leq l(p;q)\leq \log|p_1q_1|\leq c(p;q),
$$
from which the desired equality holds on $U$ (and thus on $\Omega_1$).

\section{Proof of the equality $c(p;q)=l(p;q)$; 'two-dimensional' case}
As announced earlier we consider now the set given by $3$-geodesics that are not $2$-geodesics and that appeared in our heuristic reasoning in Section~\ref{section:heuristic}. The equality of the Lempert and Carath\'eodory
functions 
follows from Corollary \ref{coreq}.

The remaining proof of the density of the union of sets introduced in the present and previous sections will constitute the main part of this section.

Consider a real-analytic mapping
$$\Phi:\DD\times \DD \times \DD \times \TT \times (0,1) \to \DD^2\times\DD^2$$ 
given by the formula (below and in the sequel $\gamma:=t\alpha+(1-t)\beta$)
$$
(\alpha, \beta, c, \omega, t) \mapsto \left(\varphi_{\alpha, \beta, \omega} (c ), 
\varphi_{\alpha, \beta, \omega} ( m_\gamma(c) )  \right),
$$
where $\varphi_{\alpha, \beta, \omega} (\zeta) := (\omega \zeta m_\alpha(\zeta), \zeta m_\beta(\zeta) )$.



Motivated by the considerations in Section~\ref{section:heuristic} we define open sets.

Denote $\mathcal A:=\{(p,q)\in\DD^2\times\DD^2:p_1=q_1 \text{ or } p_2=q_2\}$ and 
\begin{equation}
F_1:=\{(p,q)\in\DD^2\times\DD^2: |p_2|> |p_1|,\ |q_2|<|q_1|\}.
\end{equation}
We also define the set $F_2$ as the set of points
$(p,q)\in\DD^2\times\DD^2$
satisfying the following inequalities
\begin{equation}
|p_2|< |p_1|,\ |q_2|<|q_1|, m\left(\frac{p_2}{p_1}, \frac{q_2}{q_1}\right) > m(p_1,q_1).
\end{equation}

Let $F_3=\sigma (F_1)$, and $F_4=\sigma(F_2)$. Let $E_j:=F_j\setminus \mathcal A$.


Define
  $$
\Omega_2:=E_1\cup E_2\cup E_3\cup E_4.
$$

Certainly the sets $E_j$ are disjoint and open. Moreover, they are connected. Actually, $\mathcal A$ is an analytic set so it is sufficient to show the connectivity of $F_j$. But 
$F_1$ is the image of $\DD\times\DD_*\times\DD_*\times\DD$ under the mapping $ \lambda\mapsto (\lambda_1\lambda_2,\lambda_2,\lambda_4,\lambda_3\lambda_4)$. On the other hand the set $F_2$ is the image, under the mapping $\lambda\mapsto (\lambda_1,\lambda_1\lambda_2,\lambda_3,\lambda_3\lambda_4)$ of the set $B:=\{\lambda\in \DD\times\DD_*\times\DD\times\DD_*:
m(\lambda_1,\lambda_3)<m(\lambda_2,\lambda_4)\}$. The last set is connected because any point $\lambda\in B$ may be connected by the curve $[0,1]\owns t\mapsto(t\lambda_1,\lambda_2,t\lambda_3,\lambda_4)$ with $(0,\lambda_2,0,\lambda_4)$. And now it is sufficient to see that the set
$\{0\}\times\DD_*\times\{0\}\times\DD_*$ is arc-connected.

Let $G_j:=\Phi^{-1}(E_j)$. To finish the proof of the assertion it suffices to show that $$\Phi|_{G_j}:G_j\to E_j$$ is surjective. In fact, in such a case $\Phi(G_j)=E_j$ so the equality $l=c$ holds on $\Omega_2$, which together with $\Omega_1$ builds a dense subset of $\mathbb D^2\times\mathbb D^2\setminus\triangle$.

\bigskip

Therefore, to finish the proof of the theorem we go to the proof of the surjectivity of the mappings defined above.

Without loss of generality we may restrict to the cases $j=1,2$.

First note that the sets $G_j$ are non-empty. Therefore, to finish the proof it is sufficient to show that $\Phi(G_j)$ is open and closed in $E_j$.

First we show that $\Phi(G_j)$ is closed. The proof may be conducted with the standard sequence procedure; however, we shall make use of descriptions of left inverses that were given in Section~\ref{section:heuristic}.

Take $(p,q)$ in the closure of $\Phi(G_j)$ with respect to $E_j$. The continuity property implies that $c(p,q)=l(p,q)$ with the left inverse $F$ in the same form as in Lemma~\ref{lemma:left-inverses}. It follows from Remark~\ref{remark:inverses} that a function for which the supremum in the definition of $c(p,q)$ is attained is as in Lemma~\ref{lemma:left-inverses}. But the extremal mapping in the definition of the Lempert function must be of the form as in (\ref{equation:three-extremals}). This, together with the uniqueness part of Lemma~\ref{lemma:left-inverses} gives that $(p,q)\in\Phi(G_j)$.

To show that the image is open it suffices to prove that $\Phi$ is locally injective.

So assume that $\Phi(\alpha, \beta, c, \omega, t) = \Phi(\tilde \alpha, \tilde\beta,\tilde c,\tilde \omega, \tilde t)$.

Let $ \varphi:=\varphi_{\alpha,\beta,\omega}$, $\tilde\varphi:=\varphi_{\tilde\alpha,\tilde\beta,\tilde\omega}$.


Let $F(x) = \frac{\bar \omega  t x_1 + (1-t) x_2 + \eta x_1 x_2}{1 - ((1-t) \bar \omega x_1 + t x_2)\eta},$ where $\eta$ is properly chosen. It simply follows from the previous discussion that $F$ is a left inverse to both $\phi$ and $\tilde \phi$. Therefore, $F=\tilde F$ ($\tilde F$ denotes the appropriate left inverse to $\tilde\varphi$) so $t=\tilde t$ and $\omega = \tilde \omega$. 
Moreover, $c m_\gamma(c)= \tilde c m_{\tilde \gamma} (\tilde c)=:l\neq 0$. Therefore, it suffices to show the local injectivity of the function
$$\Psi: (\alpha, \beta, c)\mapsto\left(c m_\alpha(c), \frac lc m_\alpha (l/c), c m_\beta(c), \frac lc m_\beta (l/c)\right)$$ defined for $(\alpha,\beta, c)\in \DD^3$ such that $(z,w)=\Phi(\alpha, \beta, c)$ satisfies $|z_1|\neq |z_2|$, $|w_1|\neq |w_2|$, $z_1\neq w_1$ and $z_2\neq w_2$ (in particular, $\alpha\neq\beta$, $c\neq 0$).

\begin{prop}
$\Psi$ is locally injective. Moreover, $\Psi$ is two-to-one.
\end{prop}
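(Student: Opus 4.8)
The plan is to analyze the map $\Psi$ coordinate by coordinate, exploiting the fact that $l = c\, m_\gamma(c)$ is already fixed. Write $w := l/c$, so that $\Psi(\alpha,\beta,c) = \left(c\, m_\alpha(c),\, w\, m_\alpha(w),\, c\, m_\beta(c),\, w\, m_\beta(w)\right)$ with $w$ itself a function of $c$ (since $l$ is a parameter of the fiber, not a free variable, but appears through the constraint $c\, m_\gamma(c)=l$). First I would observe that the pair $(\alpha, c)$ is determined by the first two coordinates $z_1 := c\, m_\alpha(c)$ and $z_2' := w\, m_\alpha(w)$ of the image: indeed, for fixed $c$ the equation $c\, m_\alpha(c) = z_1$ is a Möbius equation in $\alpha$, hence has a unique solution $\alpha = \alpha(c)$; substituting into $w\, m_\alpha(w) = z_2'$ with $w = l/c$ gives a single equation for $c$. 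The crux is that this scalar equation for $c$ turns out to be quadratic (after clearing denominators), which is precisely the source of the ``two-to-one'' phenomenon.

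Concretely, I would compute $c\, m_\alpha(c) = c\,\frac{\alpha-c}{1-\bar\alpha c}$ and note that fixing this value together with $c$ pins down $\alpha$ rationally in $c$; plugging the resulting $\alpha$ into $\frac{l}{c}\, m_\alpha\!\left(\frac{l}{c}\right)$ and setting it equal to the (fixed) fourth-but-really-second image coordinate yields, after multiplying out, a polynomial identity in $c$ whose degree I expect to be exactly $2$. The two roots $c$ and $\hat c$ then produce the two preimages; one should check that generically $\hat c \neq c$ so that the map is genuinely two-to-one and not just ``at most two-to-one'', and — for the local injectivity claim needed for openness of $\Phi(G_j)$ — that the two roots stay separated on the relevant open set, i.e.\ the discriminant does not vanish there (the conditions $|z_1|\neq|z_2|$, $|w_1|\neq|w_2|$, $z_1\neq w_1$, $z_2\neq w_2$ are presumably exactly what rules out the degenerate coincidence $\hat c = c$, which would be where the two sheets meet). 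Once $c$ (hence $\alpha$, $w$) is recovered up to the two choices, $\beta$ is determined uniquely by the remaining coordinate $c\, m_\beta(c)$ via the same Möbius inversion, so no further multiplicity is introduced.

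The main obstacle I anticipate is the bookkeeping that shows the governing equation for $c$ is genuinely quadratic and that its discriminant is nonzero on $E_j$: one must verify that the apparent higher-degree terms cancel (this is the analogue of the ``$A+B=1$'' normalization that appeared in the heuristic derivation of $F$), and then identify the vanishing locus of the discriminant with the excluded set $\mathcal{A}$ together with the modulus-coincidence walls. I would handle this by writing everything in terms of $m_\alpha$, $m_\beta$ and their symmetries, using that $m_\alpha$ is an involution, and tracking the dependence on $l$ explicitly; the identity principle / analyticity of $\Phi$ then upgrades ``locally injective'' to ``local homeomorphism onto its image'', which is all that is needed to conclude $\Phi(G_j)$ is open in $E_j$. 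Combined with the closedness established just above and the connectedness of $E_j$, surjectivity of $\Phi|_{G_j}$ follows, completing the proof of Theorem~\ref{thm:main-theorem}.
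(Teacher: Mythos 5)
There is a genuine gap, and it sits exactly at the step you flag as the ``crux''. The elimination of $\alpha$ does not produce a polynomial equation in $c$: since $m_\alpha(\zeta)=\frac{\alpha-\zeta}{1-\bar\alpha\zeta}$ depends on $\bar\alpha$, solving $c\,m_\alpha(c)=z_1$ for $\alpha$ at fixed $c$ forces you to adjoin the conjugate equation, so the resulting $\alpha(c)$ depends on $\bar c$ as well as $c$. Substituting into the second coordinate then yields one complex (i.e.\ two real) equation in the two real unknowns $\re c,\imm c$ that is genuinely real-analytic, not holomorphic; after clearing denominators it has terms in both $c$ and $\bar c$ (of total degree $3$, not $2$), and there is no a priori bound of two on its solution set. ``Quadratic, hence at most two roots'' simply does not apply. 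The paper's proof is built precisely to dodge this: it uses \emph{both} $\alpha$-coordinates $c\,m_\alpha(c)$ and $\frac{l}{c}m_\alpha(l/c)$ simultaneously, which form a system linear in $(\alpha,\bar\alpha)$ with coefficients that are \emph{holomorphic} Laurent polynomials in $c$; solving it expresses $\alpha$ and $\bar\alpha$ separately as linear forms in $v=(c,1/c)$. Doing the same for $\beta$ packages everything as $Mv=\bar N\bar v$ with $M,N\in\CC^{2\times 2}$, and the hypotheses on $(z,w)$ (all four of them — so your claim that $\beta$ contributes ``no further multiplicity'' and can be treated as an afterthought is also off: the invertibility of $M$ or $N$ needs the $\beta$-rows) give $v=P\bar P v$ with $P\neq 0$, hence $\dim\ker(I-P\bar P)\le 1$, hence $c^2$ is determined. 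That is where ``at most two'' really comes from.

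You also miss the one-line reason the map is exactly two-to-one with no discriminant analysis: $\Psi(\alpha,\beta,c)=\Psi(-\alpha,-\beta,-c)$, and $-c\neq c$ on the domain since $c\neq 0$, so the two preimages never merge; the excluded sets $\mathcal A$ and the modulus walls are used to guarantee invertibility of $M$ or $N$, not to keep a discriminant away from zero. Local injectivity is then immediate from ``globally at most two preimages, and the two are always distinct''; no upgrading via the identity principle is needed. As written, your argument would need to be restructured along the paper's lines (or some other mechanism controlling the antiholomorphic dependence) before the count of preimages is actually established.
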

\begin{proof}
Observe first that $\Psi(\alpha, \beta, c) = \Psi(-\alpha, -\beta, -c)$. Therefore, to get the assertion it suffices to show that for fixed points $z:=(z_1,z_2)$, $w:=(w_1,w_2)$ such that $z_1\neq z_2$, $w_1\neq w_2$, $z_1\neq w_1$ and $z_2\neq w_2$ the equation $\Phi(\alpha, \beta, c) = (z_1, z_2, w_1, w_2)$ has at most two solutions.

From the equation we deduce that
\begin{align*}
\alpha =c \frac{z_2(1-z_1/l)}{z_2 - z_1} + \frac 1c \frac{z_1(z_2-l)}{z_2-z_1},\ &\text{ and }\
\bar \alpha =  c \frac{1 - z_2 /l}{z_1 - z_2} + \frac 1c \frac{z_1 - l}{z_1 - z_2}\\
\beta =c \frac{w_2(1-w_1/l)}{w_2 - w_1} + \frac 1c \frac{w_1(w_2-l)}{w_2-w_1},\ \ &\text{ and }\
\bar \beta =  c \frac{1 - w_2 /l}{w_1 - w_2} + \frac 1c \frac{w_1 - l}{w_1 - w_2}.
\end{align*}
We can write the above equations in the form 
\begin{equation*}
\begin{pmatrix} \alpha \\ \beta \end{pmatrix}= M \begin{pmatrix} c \\ \frac1c \end{pmatrix} ,
\quad
\begin{pmatrix} \bar \alpha \\ \bar \beta \end{pmatrix}= N \begin{pmatrix} c \\ \frac1c \end{pmatrix},
\end{equation*}
where $M, N \in \mathbb C^{2\times 2}$. Set $v:= \begin{pmatrix} c \\ \frac1c \end{pmatrix}$, the
equations imply that $Mv = \bar N \bar v$. 

Notice that 
\begin{align*}
\det M &= \frac{z_2(1- z_1/l) w_1 (w_2-l) - w_2(1- w_1/l) z_1 (z_2-l)}{(z_2 - z_1)(w_2 - w_1)},
\\
\det N &= \frac{(1-z_2/l) (w_1 - l) - (1- w_2/l) (z_1 - l)}{(z_2 - z_1)(w_2 - w_1)}.
\end{align*}
The hypotheses made on $z$ and $w$ ensure that $(1-z_2/l) (w_1 - l)$ and $(1- w_2/l) (z_1 - l)$
cannot vanish simultaneously, so if $\det N=0$, we see that the equation $\det M=0$ reduces
to $z_2w_1-z_1w_2=0$. Since $l\neq 0$, this together with $\det N=0$ would imply $z_1=z_2$, which
is excluded.  Therefore at least one of the matrices $M$ or $N$ is invertible.  Suppose for now
that $M$ is invertible, we have $v= P \bar v$, with $P:= M^{-1}\bar N$.  Since $\bar v= \bar P v$,
 we see that $v = P \bar P v$.  
 
 Since $M^{-1}$ is invertible, $P=0$ if and only if $N=0$,
 which is impossible by the  hypotheses made on $z$ and $w$.  So $\dim \ker (I-P \bar P) \le 1$,
 which means, since $v$ cannot be $0$,
  that there is a nonzero vector $w\in \mathbb C^2$, depending only on $z,w,l$, such that
 $v$ is colinear to $w$, which implies $c^2=w_1/w_2$. So we have at most two possible values 
 for $(\alpha, \beta, c)$.
 
 If $\det M=0$, then $N$ is invertible and we reason in the same way starting from $v = N^{-1}\bar M v$.

\end{proof}

\end{document}